\newcommand{\C}{\mathbb{C}}
\def\beq{\begin{equation}}
\def\eeq{\end{equation}}
\def\arr{\hbox to 20pt{\rightarrowfill}}
\def\a{\alpha}
\def\Ad{\operatorname {Ad}}
\newenvironment{res} 
               {\begin{equation} 
\begin{minipage}{0.85\textwidth}} 
               { \end{minipage}\end{equation} } 
\def\ber{\begin{res} } 
\def\eer{\end{res}} 
\numberwithin{equation}{section} 
\newtheorem{thm}{Theorem}[section]
\newtheorem{lemma}[thm]{Lemma} 
\newtheorem{cor}[thm]{Corollary}
\def\section{\@startsection {section}{1}{\z@}{3.5ex plus 1ex minus 
    .2ex}{2.3ex plus .2ex}{\large\bf}} 
    \def\subsection{\@startsection{subsection}{2}{\z@}{3.25ex plus 1ex minus 
 .2ex}{1.5ex plus .2ex}{\bf}} 
\def\pf{{\em Proof}.\, } 
\def\bysame{\leavevmode\hbox to3em{\hrulefill}\,}
\def\af{\mathfrak{a}} 
\def\gf{\mathfrak{g}}
\def\kf{\mathfrak{k}}
\def\pf{\mathfrak{p}}
\def\Ze{\mathcal{Z}}
\def\Re{\operatorname{Re}}
\def\Cas{\mathrm{C}}
\begin{document} 

\title[Holomorphic extension]
{Holomorphic extension of eigenfunctions}
\author{Bernhard Kr\"otz} 
\address{Max-Planck-Institut f\"ur Mathematik\\  
Vivatsgasse 7\\ D-53111 Bonn
\\email: kroetz@mpim-bonn.mpg.de}

\author{Henrik Schlichtkrull}
\address{Department of Mathematics\\ 
University of Copenhagen\\ Universitetsparken 5 \\ 
DK-2100 K\o{}penhavn 
\\ email: schlicht@math.ku.dk} 

\date{\today}
\begin{abstract}{Let $X=G/K$ be a Riemannian symmetric space
of non-compact type. We prove a theorem of holomorphic extension 
for eigenfunctions of the Laplace-Beltrami operator
on $X$, by techniques from the theory of partial 
differential equations.}\end{abstract} 
\thanks{}
\maketitle 

\section{Introduction}
Let $X$ be a Riemannian symmetric space of non-compact type.
Then $X=G/K$, where $G$ is a connected semisimple Lie group 
and $K$ a maximal compact subgroup. We choose the group $G$ 
such that it is contained in a complexification $G_\C$, and 
we denote by $K_\C\subset G_\C$ the complexification of $K$. 
The symmetric space $X_\C=G_\C/K_\C$ carries a natural 
complex structure, and it contains $X$ as a totally real 
submanifold.

We are interested in eigenfunctions of the
Laplace-Beltrami operator $\Delta$ on $X$. Since this operator is
elliptic and $G$-invariant, every eigenfunction 
admits a holomorphic extension to some open $G$-invariant neighborhood
of $X$ in $X_\C$. The $G$-orbits in $X_\C$ are generally difficult
to parametrize, but let us recall that a particular $G$-invariant open
neighborhood $\Xi$ of $X$, for which the orbit structure is 
compellingly simple, has been proposed in \cite{AG}. It is commonly called
the {\it complex crown} of $X$, and it has been thoroughly
investigated in recent years. See for example \cite{B,F,GK,Huck,KS1,KS2,M}.
In the present paper we show that every eigenfunction 
for $\Delta$ extends holomorphically to $\Xi$. 

Our result generalizes a result from \cite{KS2} that every
joint eigenfunction for the full set of invariant differential 
operators on $X$ extends holomorphically to $\Xi$. The proof
given in \cite{KS2} invokes the Helgason conjecture, affirmed in
\cite{KKMOOT} by micro-local analysis. Our proof 
is considerably simpler. The crucial step is an application
of a theorem from the theory of analytic partial differential equations. 
This theorem asserts the existence of 
a holomorphic extension to solutions which
are holomorphic on one side of a non-characteristic surface.

At the end of the paper a further generalization is given
to functions on $G$, which are eigenfunctions for the Casimir
operator and right-$K$-finite.

\section{Notation}
We denote by $\gf=\kf\oplus\pf$ the Lie algebra of $G$ and its
Cartan decomposition. We choose a maximal abelian subspace
$\af$ in $\pf$ and denote by $\Sigma\subset\af^*$ the corresponding
system of restricted roots.
The root spaces in $\gf$ are denoted by $\gf^\alpha$,
where $\alpha\in\Sigma$, and by $\Sigma^+\subset\Sigma$ we 
denote a positive system. The centralizer of 
$\af$ in $K$ is $M=Z_K(\af)$, and the Weyl
group is $W=N_K(\af)/M$, where $N_K(\af)$ is the
normalizer.

Recall the definition of the complex crown $\Xi$ of $X$. 
We set 
$$\Omega:=\{Y\in\af\mid 
|\alpha(Y)|<\frac\pi 2, \forall \alpha\in\Sigma\}\, .$$
Then
$$\Xi:= G\exp(i\Omega) K_\C =\{g\exp(iY)\cdot x_0
\mid g\in G, Y\in\Omega\}\subset X_\C.$$ 
Here $x_0$ denotes the standard base point $eK_\C$ in $X_\C$.

\section{Results}
\begin{lemma}\label{l:crownstructure} The $G$-invariant 
crown $\Xi$ is an open subset of $X_\C$. The surjective map
$$\Phi: G\times\Omega\to\Xi,\quad (g,Y)\mapsto g\exp(iY)\cdot x_0$$ 
is real analytic, and the topology of $\Xi$ is identical to
the quotient topology with respect to this map.

Let $\Omega^+$ be the intersection of $\Omega$ with the positive
open chamber in $\af$, and let $\Xi'=\Phi(G\times\Omega^+)$. 
Then $\Xi'$ is open and dense in $\Xi$, and
$$\Phi': G/M\times\Omega^+\to\Xi',\quad (gM,Y)\mapsto\Phi(g,Y)$$
is a diffeomorphism.
\end{lemma}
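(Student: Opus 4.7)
The plan is to reduce everything to an explicit computation of the differential of $\Phi'$ on the regular stratum $G/M\times\Omega^+$, and then derive the remaining assertions from density and $W$-equivariance, together with the $G$-equivariance of $\Phi$.

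First I would verify that $\Phi'$ is a local diffeomorphism. By $G$-equivariance it suffices to compute the differential at a point $(eM,Y)$ with $Y\in\Omega^+$. Using left translation by $\exp(-iY)$ to pull tangent vectors at $\exp(iY)\cdot x_0$ back to $T_{x_0}X_\C\cong\pf_\C$, a vector $X\in\gf$ in the group direction contributes $\Ad(\exp(-iY))(X)\bmod\kf_\C$ and a direction $Z\in\af$ contributes $iZ$. Decomposing $\gf=\mf\oplus\af\oplus\bigoplus_{\alpha\in\Sigma}\gf^\alpha$: the $\mf$-part is annihilated (since $\mf\subset\kf$ commutes with $\af$); the $\af$-part gives $\af$; the $Y$-variation gives $i\af$; and for each pair $\gf^{\pm\alpha}$ with $\alpha\in\Sigma^+$ the contribution is the real span of $\{e^{-i\alpha(Y)}v,\,-e^{i\alpha(Y)}v:v\in\pf^\alpha\}$, where $\pf^\alpha=\pf\cap(\gf^\alpha\oplus\gf^{-\alpha})$. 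Since $0<\alpha(Y)<\pi/2$ on $\Omega^+$, the exponentials $e^{\pm i\alpha(Y)}$ are $\R$-linearly independent, so this real span equals $\pf^\alpha_\C$. The total image is $\af_\C\oplus\bigoplus_{\alpha\in\Sigma^+}\pf^\alpha_\C=\pf_\C$, matching the real dimension of the source, hence the differential is bijective.

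The main obstacle is the injectivity of $\Phi'$ on $G/M\times\Omega^+$. Suppose $g_1\exp(iY_1)\cdot x_0=g_2\exp(iY_2)\cdot x_0$ with $Y_j\in\Omega^+$, equivalently $\exp(-iY_2)g_2^{-1}g_1\exp(iY_1)\in K_\C$. The crux is to show $Y_1=Y_2$; I would establish this through a polar-type uniqueness statement for $G_\C/K_\C$, namely that for $Y\in\Omega$ the $K_\C$-orbit of $\exp(iY)\cdot x_0$ determines $Y$ modulo $W$, which can be proved via the compact dual or by a direct root-space analysis of the equation above. Once $Y_1=Y_2=:Y$, the equation becomes $\exp(-iY)g_2^{-1}g_1\exp(iY)\in K_\C$ with $g_2^{-1}g_1\in G$, and regularity of $Y$ forces $g_2^{-1}g_1\in M$.

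The remaining assertions then follow with modest effort. Openness of $\Xi'$ is immediate from the local diffeomorphism property together with injectivity. Density of $\Xi'$ in $\Xi$ follows from the density of the open chamber in $\af$ together with the relation $\Phi(gw,Y)=\Phi(g,wY)$ for $w\in N_K(\af)$, which shows that the $W$-translates of $\Omega^+$ cover a dense subset of $\Omega$ and map into $\Xi'$. Openness of $\Xi$ itself is the subtler point at chamber walls, where $\Phi$ ceases to be a submersion; I would argue either by a direct local analysis showing that each neighborhood of a singular point $g\exp(iY)\cdot x_0$ meets $\Xi'$ openly in $X_\C$, or else invoke the original result of \cite{AG}. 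The quotient topology claim then follows because $\Phi'$ is already a diffeomorphism onto the dense open subset $\Xi'\subset\Xi$, which makes $\Phi$ a topological quotient map onto $\Xi$.
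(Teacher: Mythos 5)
Most of what you do is a reasonable, more self-contained route to the parts of the lemma that the paper simply outsources to \cite{KS2}, \S 4: your computation of $d\Phi'$ at $(eM,Y)$ is correct (the image is $\af_\C\oplus\bigoplus_{\alpha\in\Sigma^+}(\pf_\alpha)_\C=\pf_\C$ exactly because $0<\alpha(Y)<\pi/2$ for all $\alpha\in\Sigma^+$, and the real dimensions match), and together with injectivity — which you only sketch, via the polar-type uniqueness of $Y$ modulo $W$, but which is the right statement and is established in \cite{AG}/\cite{KS2} — this gives that $\Phi'$ is a diffeomorphism and $\Xi'$ is open. Density and the openness of $\Xi$ are acceptably delegated to \cite{AG}.

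The genuine gap is your last sentence. That $\Phi'$ is a diffeomorphism onto the dense open subset $\Xi'$ does \emph{not} imply that $\Phi$ is a quotient map onto $\Xi$; this inference is false in general topology, and it begs precisely the question that has content, namely what happens at points of $\Xi\setminus\Xi'$. (Compare $t\mapsto e^{it}$ from $[0,2\pi)$ onto $S^1$: a continuous bijection restricting to a homeomorphism over the dense open set $S^1\setminus\{1\}$, yet the induced quotient topology on $S^1$ is strictly finer than the standard one, the failure occurring exactly at the single point not covered nicely.) Concretely, if $\Phi^{-1}(V)$ is open and $z\in V$ lies over a wall of the chamber, your argument shows only that $V\cap\Xi'$ is open; it gives no neighborhood of $z$ itself inside $V$, because the fiber $\Phi^{-1}(z)$ is larger there and $\Phi$ is no longer a submersion. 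What is needed — and what constitutes the only original content of the paper's proof — is a properness/lifting statement at the singular points: every convergent sequence $z_n\to z$ in $\Xi$ admits a subsequence of the form $z_j=\Phi(g_j,Y_j)$ with $g_j\to g$ in $G$ and $Y_j\to Y$ in $\Omega$. The paper extracts this from Propositions 1 and 7 of \cite{AG} (which produce decompositions $z_n=\Phi(g_nk_n,Y_n)$ with $g_n$ and $k_n\exp(iY_n)\cdot x_0$ convergent) together with the fact that $Y\mapsto\exp(iY)\cdot x_0$ is a diffeomorphism of $\Omega$ onto its image. Some compactness argument of this kind must be supplied; it cannot be read off from the behavior of $\Phi$ over $\Xi'$ alone.
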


\begin{proof} Apart from the statement about the topology of
$\Xi$, this can be found in \cite{KS2}, \S 4.
For the topological statement we need to prove 
that a subset of $\Xi$ is open if its preimage is open.
It suffices to prove the following. Let $z_n\to z\in\Xi$
be a converging sequence. Then there exists a subsequence
of the form $z_j=\Phi(g_j,Y_j)$ with converging sequences
$g_j\to g\in G$ and $Y_j\to Y\in\Omega$. It follows from
\cite{AG}, Propositions 1 and 7, that there exist sequences
$g_n$ in $G$, $k_n$ in $K$  and $Y_n$ in $\Omega$
such that $z_n=\Phi(g_nk_n,Y_n)$, and such that
$g_n$ and $k_n\exp(iY_n)\cdot x_0$ both converge.
By passing to a subsequence, we may assume that
$k_j$ converges, and since $Y\mapsto \exp(iY)\cdot x_0$
is a diffeomorphism of $\Omega$ onto its image, it 
then follows that $Y_j$ converges in $\Omega$.
\end{proof}

\begin{thm}\label{t:holoext} Let  
$f\in C^\infty(X)$ be an eigenfunction for $\Delta$. 
Then $f$ extends to a holomorphic function on $\Xi$.
\end{thm}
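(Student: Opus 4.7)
The approach combines ellipticity of $\Delta$, which yields real analyticity and hence a local holomorphic extension across $X$, with a Zerner--Bony--Schapira type theorem that propagates holomorphic extensions of analytic PDE solutions across non-characteristic hypersurfaces; together with the coordinate description of $\Xi$ in Lemma~\ref{l:crownstructure} this allows one to push the extension all the way out to $\partial\Xi$. Concretely, since $\Delta$ is elliptic with real-analytic coefficients, $f$ is real analytic on $X$ and thus extends to a holomorphic function $F$ on some open neighborhood $U_0\subset X_\C$ of $X$, with $\Delta_\C F=\lambda F$ there (writing $\Delta_\C$ for the holomorphic continuation of $\Delta$). Because $\Delta$ is $G$-invariant and $G$ acts holomorphically on $X_\C$, each translate $f\circ g^{-1}$ is again a $\Delta$-eigenfunction, so a local extension of that translate near a point $z$ gives a local extension of $F$ near $g\cdot z$. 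By Lemma~\ref{l:crownstructure} the task reduces to producing, for every $Y\in\Omega$, a holomorphic extension of $F$ near $\exp(iY)\cdot x_0$.

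Fix such $Y$ and consider the path $\gamma(t)=\exp(itY)\cdot x_0$, $t\in[0,1]$, which lies in $\Xi$ by convexity of $\Omega$. Let
$$T=\sup\{\,t\in[0,1]\mid F\text{ extends holomorphically to an open neighborhood of }\gamma([0,t])\,\}.$$
Then $T>0$ by the initial extension. Assuming for contradiction $T<1$, choose a real-analytic hypersurface $S_T\subset X_\C$ through $\gamma(T)$ that agrees with the $G$-orbit through $\gamma(T)$ in the angular directions and is transverse to $\gamma$; in the coordinates provided by $\Phi'$ on the dense open set $\Xi'$, take $S_T$ to be the preimage of an affine hyperplane in $\af$ through $TY$ orthogonal to $Y$. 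On $S_T$ the function $F$ is $C^\infty$ and is holomorphic on the side $t<T$; a non-characteristic holomorphic extension theorem for $\Delta_\C$ (in the spirit of Zerner, refined by Bony--Schapira) then produces a holomorphic extension of $F$ across $S_T$ near $\gamma(T)$, provided $S_T$ is non-characteristic there for $\Delta_\C$. This would contradict the maximality of $T$, forcing $T=1$ and the required extension at $\exp(iY)\cdot x_0$.

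The main obstacle is the verification that $S_T$ is non-characteristic for $\Delta_\C$, which is the geometric substance of the definition of $\Omega$. Using $\Phi'$ to compute $\Delta_\C$ in polar-type coordinates $(gM,Y)\in G/M\times\Omega^+$, the conormal of $S_T$ at $\gamma(T)$ is (up to scale) dual to $Y\in\af$, and the value of the principal symbol of $\Delta_\C$ on this conormal is the analytic continuation of the Riemannian metric in the imaginary $\af$-direction. This continuation involves trigonometric factors $\cos(\alpha(Y))$ arising from the Jacobian of $\exp$ at $iY$ paired against the metric on $G/K$; these factors vanish precisely when $|\alpha(Y)|=\pi/2$ for some root $\alpha\in\Sigma$, i.e.\ precisely on $\partial\Omega$, and are non-zero throughout $\Omega$ itself. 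Hence $S_T$ is non-characteristic for all $T$ with $TY\in\Omega$, and the extension theorem applies. This also clarifies why $\Xi$ is the natural limiting domain of the approach: on $\partial\Omega$ the operator $\Delta_\C$ degenerates exactly in the direction we need to propagate.
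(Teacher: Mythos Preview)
Your strategy---ellipticity for an initial holomorphic germ, then Zerner's theorem to push the extension outward across $G$-invariant hypersurfaces---is exactly the paper's. The execution, however, has genuine gaps.

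The non-characteristic verification is not correct as written. For your hypersurface $S_T=\Phi'(G/M\times H)$ with $H\perp Y$, the conormal $\zeta\in\pf_\C^*$ at $\gamma(T)$ annihilates every $X_j^\alpha\in\pf_\alpha$ (these are orthogonal to $\af$), so in
\[
\sigma(\Delta_\C)(\zeta)=\sum_{\alpha,j}[\cos\alpha(TY)]^{-2}\,\zeta(X_j^\alpha)^2+\sum_i\zeta(Y_i)^2
\]
the $\cos$-weighted terms all vanish and the value is simply $-c^2|Y|^2\neq 0$, with no dependence on the factors $\cos\alpha(TY)$. So $S_T$ \emph{is} non-characteristic, but not for the reason you give; your explanation that ``these factors vanish precisely when $|\alpha(Y)|=\pi/2$'' plays no role here. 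The mechanism the paper isolates is different: $G$-invariance of the hypersurface forces $\zeta$ to be purely imaginary on $\pf$, whence every term in the displayed sum is $\le 0$ and at least one is $<0$. The cosines matter only in that they make the above expression for $\Delta_\C$ in the left-invariant frame valid throughout $\Xi$.

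Two further steps are missing. First, your ray-based definition of $T$ does not furnish the one-sided domain Zerner requires: the neighborhoods of $\gamma(t)$ may shrink as $t\uparrow T$, and when $\dim\af>1$ their union need not contain any half-ball $\{\phi<0\}\cap B_\varepsilon(\gamma(T))$ about $\gamma(T)$. The paper avoids this by working with tubes $T_r=G\exp(iB_r)\cdot x_0$ over \emph{balls} $B_r\subset\Omega^+$ centered at a fixed $Y_0$: once $F$ is holomorphic on all of $T_r$, Zerner at every boundary point, together with $G$-invariance and compactness of $\partial B_r$, yields $T_{r+\varepsilon}$, and one iterates. Second, your argument uses the chart $\Phi'$ and therefore only reaches points $\exp(iY)\cdot x_0$ with $Y$ regular, i.e.\ only $\Xi'$. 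The passage from $\Xi'$ to $\Xi$ is a separate step: the paper restricts to the flats $f_g(Y)=f(g\exp Y\cdot x_0)$, uses the Weyl group to obtain holomorphy on $\af+i\Omega'$, and then applies Bochner's tube theorem to fill in $\af+i\Omega$; the topological part of Lemma~\ref{l:crownstructure} finally turns the resulting continuous extension on $\Xi$ into a holomorphic one.
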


\begin{proof} 
As $\Delta$ is elliptic, the regularity theorem 
for elliptic differential operators 
(see \cite{Horm}, Theorem 7.5.1) 
implies that $f$ is an analytic function. As such 
it has an extension to a holomorphic 
function on some open neighborhood $U_0$ of $x_0$ in $\Xi$. 
It follows from the proof in \cite{Horm},
that $U_0$ can be chosen independently of $f$,
that is, {\it every} eigenfunction can be 
holomorphically extended to $U_0$
(the radius of convergence obtained in the proof
depends only on the corresponding radii for the
coefficients of the differential operator).
In particular, it follows from the fact that 
$\Delta$ is $G$-invariant, 
that $L_gf$ extends to $U_0$ for all $g\in G$. 
The union $U$ of the $G$-translated sets $L_{g^{-1}}(U_0)$ 
is then a $G$-invariant open neighborhood of $X$ in $\Xi$, 
to which $f$ extends.

We now consider the open dense subset $\Xi'\subset 
\Xi$ from Lemma \ref{l:crownstructure}. The intersection
$U\cap\Xi'$ is non-empty, open and $G$-invariant. 
Let $Y_0\in\Omega^+$, and for $r>0$
let $B_r$ denote the open ball in $\af$ of radius $r$,
centered at $Y_0$. If $B_r\subset\Omega^+$, then we define an open
set $$T_r=G\exp(iB_r)\cdot x_0\subset \Xi',$$ which we regard
as a $G$-invariant 
`circular tube' in $\Xi'$. We claim that if 
$f$ extends holomorphically to a set containing some 
circular tube $T_r\subset\Xi'$
centered at $Y_0$, then it extends to all circular tubes in $\Xi'$
centered at $Y_0$. Since $Y_0$ was arbitrary, and since
$\Omega^+$ is simply connected, it follows from this 
claim that $f$ extends 
holomorphically from $U\cap \Xi'$ to $\Xi'$.

In order to establish the claim we use Theorem 9.4.7 
of \cite{H}, due to Zerner \cite{Z}. 
We  write $\Delta_\C$ for the extension of $\Delta$
to a $G_\C$-invariant holomorphic differential operator on $X_\C$.
Obviously, the holomorphic extension that we seek will
be an eigenfunction for $\Delta_\C$ on $\Xi$.
It follows from Lemma \ref{l:crownstructure} that
each circular tube $T_r$, for which the closure is contained in
$\Xi'$, has real-analytic boundary $\partial T_r$. 
In order to apply Zerner's theorem it suffices 
to establish that $\partial T_r$
is non-characteristic for $\Delta_\C$, for all such tubes.
By $G$-invariance, it suffices to consider boundary
points $x\in\partial T_r$ with
$x\in\exp(i\Omega)\cdot x_0$.
Recall from \cite{KS2}, p. 207, that when 
$x\in\exp(i\Omega)\cdot x_0$
we have a complex-linear isomorphism
\begin{equation}\label{tangentmap}
 \pf_\C\ni Z\mapsto \tilde Z_x\in T_x\Xi,
\end{equation}
where $\tilde Z$ is the holomorphic vector field on $X_\C$ given by 
$$\tilde Z_x \varphi= L_Z\varphi(x)=\frac{d}{dz} 
\varphi(\exp(-zZ)x)|_{z=0}.$$
In this isomorphism the tangent space at $x$ of the
boundary $\partial T_r$ will then be a real
hyperplane given
by an equation $\Re\zeta(Z)=0$ for some 
cotangent vector $\zeta\in \pf_\C^*$.
Since the tube $T_r$ is $G$-invariant, 
it follows that $\Re\zeta$ annihilates  $Z$
for all $Z\in \pf$, so $\zeta$ is purely imaginary on $\pf$.

Let $(X_j^\alpha)_{\alpha\in\Sigma^+, 1\leq j\leq m_\alpha}$ 
together with $Y_1,\dots,Y_r\in \af$
be an orthonormal basis for $\pf$ such that 
$X_j^\alpha\in \pf_\alpha:=[\gf^\alpha+\gf^{-\alpha}]\cap \pf$.
Here $m_\alpha=\dim \pf_\alpha$ as usual. In the 
universal enveloping algebra we have 
$$\Delta =\sum_{\alpha\in \Sigma^+}\sum_{j=1}^{m_\alpha} (X_j^\alpha)^2
+\sum_{i=1}^r Y_i^2$$
with respect to the right action, where functions on $G/K$ 
are regarded as a
right $K$-invariant function on $G$. Observe that
modulo  $\kf$,
$$\Ad(a)^{-1}(X^\alpha_j)= \cosh(\alpha(\log a)) X^\alpha_j$$
for $a\in A$
(see \cite{KS2} p. 207), and hence
$$R_{X^\alpha_j}f(a)=-[\cosh\alpha(\log a)]^{-1} L_{X^\alpha_j}f(a).$$
It follows that
\begin{equation}\label{e:Delta}
\Delta=\sum_{\alpha\in \Sigma^+}\sum_{j=1}^{m_\alpha} 
[\cosh\alpha(\log a)]^{-2} (X_j^\alpha)^2+\sum_{i=1}^r 
Y_i^2
\end{equation}
at $z=a\cdot x_0\in X$,
with respect to the left action.

By analytic continuation, the same equation 
holds as well for $\Delta_\C$ and $a\in A_\C$.
In particular, at $x=\exp(iY)\cdot x_0$ we obtain
$$
\Delta_\C=\sum_{\alpha\in \Sigma^+}\sum_{j=1}^{m_\alpha} 
[\cos\alpha(Y)]^{-2} [(\tilde X_j^\alpha)_x]^2+\sum_{i=1}^r
[(\tilde Y_i)_x]^2.
$$
Note that the condition that $x$ belongs to the crown
precisely ensures that $\cos\alpha(Y)\neq 0$,
so that the expression makes sense.
As $\zeta$ is purely imaginary on $\pf$,
it follows that all terms in the above sum are $\leq 0$
when applied to $\zeta$.
Thus the principal symbol of $\Delta_\C$
is non-zero at $\zeta$, and
the boundary of $T_r$ is non-characteristic. It follows that
Zerner's theorem can be applied, so that
$f$ extends holomorphically to $\Xi'$.

For the extension to the full set $\Xi$ we shall 
apply Bochner's theorem
(see \cite{severalcomplex}, Theorem 2.5.10). 
>From what we have seen so far, for all $g\in G$ the function 
$$f_g: \af\to \C , \ \ Y\mapsto f(g\exp(Y)\cdot x_0)$$
extends to a holomorphic function on a tubular
neighborhood $\af+i\omega$ of $\af$ in $\af_\C$,
and also to $\af+i\Omega^+$. For elements $w\in N_K(\af)$
we have 
$$
f_{g}(\Ad(w)Y)=f_{gw}(Y).
$$
It follows that $f_g$ extends to a holomorphic
function on each Weyl conjugate of $\af+i\Omega^+$, hence to
$\af+i\Omega'$, where
$\Omega'=\cup \Ad(w)(\Omega^+)$ is the set of
regular elements in $\Omega$. 
Now
Bochner's theorem implies that $f_g$ extends to a 
holomorphic function on the tube over the convex hull of
$\omega\cup\Omega'$, that is, to $\af+i\Omega$. 
Furthermore, $g\mapsto f_g$ is continuous 
into $H(\af+i\Omega)$ (with standard topology), 
since it is continuous into the space
$H(\af+i(\omega\cup \Omega'))$ which by Bochner's theorem
is topologically isomorphic.

Recall that for all
$g,g'\in G$ and $Y,Y'\in\Omega$ we have
$$g\exp(iY)\cdot x_0=g'\exp(iY')\cdot x_0$$ 
if and only if there exists $w\in N_K(\af)$ and $k\in Z_K(Y)$
with $g'=gkw$ and $Y'=\Ad(w^{-1})Y$. It follows easily that
by
$$g\exp(iY)\cdot x_0\mapsto f_g(Y)$$
we obtain a well-defined extension of $f$ on $\Xi$.
The topological statement in the first part of
Lemma \ref{l:crownstructure}
implies that this extension is continuous. Since the extension 
is holomorphic
on $\Xi'$, it must be holomorphic everywhere.
\end{proof}

We list some easy consequences of the preceding theorem and its proof.
>From the Iwasawa decomposition $G=NAK$ associated to
the positive system $\Sigma^+$, we obtain 
the familiar horospherical projection $x\mapsto H(x)\in\af$,
defined by $x\in N\exp H(x)\cdot x_0$ for $x\in X$. 
For each $\lambda\in\a_\C^*$ 
the function
$$x\mapsto e^{\lambda(H(x))}$$
on $X$ is an eigenfunction for $\Delta$, hence extends to a holomorphic
function on $\Xi$. 
We obtain: 

\begin{cor} The projection $H: X\to \af$ extends to a holomorphic map
$\Xi\to \af_\C$. Moreover, $\Xi\subset N_\C A_\C K_\C$.
\end{cor}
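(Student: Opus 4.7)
The plan is to apply Theorem~\ref{t:holoext} to the parametric family of eigenfunctions $x\mapsto e^{\lambda(H(x))}$ and then recover $\tilde H$ by taking logarithms. For each $\lambda\in\af_\C^*$, let $E_\lambda$ denote the holomorphic extension of $e^{\lambda\circ H}$ to $\Xi$ produced by the theorem. The identity $E_\lambda E_{-\lambda}=1$ holds on the maximal totally real submanifold $X\subset\Xi$, hence on all of $\Xi$; thus $E_\lambda$ is nowhere vanishing.

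Next I would show that $\Xi$ is simply connected. The rescaling $(g,Y)\mapsto(g,tY)$ on $G\times\Omega$ descends through $\Phi$ to a deformation retraction of $\Xi$ onto $X$: if $\Phi(g,Y)=\Phi(g',Y')$, so that $g'=gkw$ and $Y'=\Ad(w^{-1})Y$ with $w\in N_K(\af)$ and $k\in Z_K(Y)\subset Z_K(tY)$, then $\Phi(g,tY)=\Phi(g',tY')$ for every $t\in[0,1]$ (using the topological characterization in Lemma~\ref{l:crownstructure} for continuity). Since $X=G/K$ is contractible, so is $\Xi$. Combined with the nonvanishing of $E_\lambda$, this gives a unique holomorphic $\tilde H_\lambda:\Xi\to\C$ satisfying $e^{\tilde H_\lambda}=E_\lambda$ and $\tilde H_\lambda|_X=\lambda\circ H$. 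Fixing a basis $\lambda_1,\dots,\lambda_r$ of $\af^*$ with dual basis $Y_1,\dots,Y_r$ of $\af$, I set $\tilde H(x):=\sum_i\tilde H_{\lambda_i}(x)\,Y_i$; this is the desired holomorphic extension $\Xi\to\af_\C$.

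For the inclusion $\Xi\subset N_\C A_\C K_\C$, consider the holomorphic map $\Psi:\Xi\to X_\C$, $\Psi(x):=\exp(-\tilde H(x))\cdot x$. For $x=n\exp(H(x))\cdot x_0\in X$, since $A$ normalises $N$ we have $\Psi(x)\in N\cdot x_0\subset N_\C\cdot x_0$. The orbit $N_\C\cdot x_0$ is a complex submanifold of $X_\C$, closed in the open Iwasawa set $U:=N_\C A_\C K_\C\cdot x_0$. Since $X$ is maximal totally real in $\Xi$, any holomorphic function on $\Xi$ vanishing on $X$ vanishes in a neighborhood of $X$; applied to local defining functions of $N_\C\cdot x_0$ pulled back by $\Psi$, this shows $\Psi$ sends a neighborhood of $X$ in $\Xi$ into $N_\C\cdot x_0$. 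I would then propagate globally by arguing that $S:=\{x\in\Xi:\Psi(x)\in N_\C\cdot x_0\}$ is both open (since $U$ is open and $\tilde H$ must agree with the holomorphic Iwasawa $A_\C$-component on $\Xi\cap U$) and closed, so $S=\Xi$ by connectedness, whence every $x\in\Xi$ factors as $x=\exp(\tilde H(x))\cdot\Psi(x)\in N_\C A_\C\cdot x_0$. The hardest part will be this last closedness/propagation step: because $N_\C\cdot x_0$ need not be closed in $X_\C$, one must use the globally defined $\tilde H$ to control the limit behavior of the Iwasawa $A_\C$-component along approaches to $\partial S$.
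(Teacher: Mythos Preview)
Your extension of $H$ is the paper's argument: apply Theorem~\ref{t:holoext} to the eigenfunctions $e^{\lambda(H(\cdot))}$, deduce nonvanishing from $E_\lambda E_{-\lambda}=1$ on the maximal totally real $X$, then take holomorphic logarithms using that $\Xi$ is simply connected. The paper simply asserts simple connectivity; your retraction sketch is a useful addition.

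For $\Xi\subset N_\C A_\C K_\C$ you diverge from the paper and correctly locate the sticking point: whether $N_\C\cdot x_0$ is closed in $X_\C$. It is. Since $K_\C$ is reductive, $X_\C=G_\C/K_\C$ is an affine variety, and every orbit of the unipotent group $N_\C$ on an affine variety is closed (Kostant--Rosenlicht). With this in hand you should abandon the open/closed argument---openness of $S$ is delicate, because $N_\C\cdot x_0$ has positive codimension and $\tilde H$ need not coincide with the local Iwasawa $A_\C$-projection on components of $\Xi\cap U$ not meeting $X$---and argue directly: $\Psi^{-1}(N_\C\cdot x_0)$ is a closed analytic subset of the connected manifold $\Xi$ containing the maximal totally real $X$, hence equals $\Xi$. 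The paper's route is different: it reads off $n(x)$ from the globally holomorphic map $gK_\C\mapsto g\theta(g)^{-1}$, which for $x=na\cdot x_0$ equals $na^{2}\theta(n)^{-1}$, so that knowing $a=\exp\tilde H(x)$ determines $n$. Unpacked, that argument likewise rests on a unipotent-orbit closedness (of $N_\C\bar N_\C$ in $G_\C$); once you supply the missing fact your approach is comparably clean.
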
 

\begin{proof} 
Let $h_\lambda(z)$ denote the analytic continuation
of $e^{\lambda(H(x))}$. Since $h_{-\lambda}(z)=h_\lambda(z)^{-1}$
we conclude that $h_\lambda(z)\neq0$. As $\Xi$ is simply
connected, the analytic continuation of $\lambda(H(x))$
is obtained by taking logarithms, and the first statement
follows. For the last statement, we note that once the
Iwasawa $A$-component allows an analytic continuation,
then so does the $N$-component. Indeed, knowing
the $A$-component, we can determine the 
$N$-component of $x\in X$ from $\theta(x)x^{-1}$, where $\theta$ 
denotes the Cartan involution.
\end{proof}

The preceding corollary  
was obtained for classical groups in \cite{KS1}. The
general case follows from results established in \cite{M}, 
\cite{BHH} and \cite{Huck} with \cite{Huck2}.

Let $\omega\subset\Omega$ be open, convex and $W$-invariant, and let
$T_\omega\subset\Xi$ denote the open set 
$$T_\omega=G\exp(i\omega)\cdot x_0.$$

\begin{cor} Let $f\in C^\infty(X)$ and let 
$P$ be a non-trivial polynomial of one variable.
If $P(\Delta)f$ extends to a 
holomorphic function on $T_\omega$, then so does $f$. 
\end{cor}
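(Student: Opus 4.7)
The strategy is to repeat the proof of Theorem~\ref{t:holoext} with $\Delta$ replaced by $P(\Delta)$ and $\Xi$ replaced by $T_\omega$. The key observation is that $P(\Delta)$ is itself an elliptic analytic differential operator whose principal symbol is a nonzero scalar multiple of $\sigma(\Delta)^{\deg P}$, so $P(\Delta_\C)$ is non-characteristic at exactly those covectors where $\Delta_\C$ is. Consequently, the computation in the proof of Theorem~\ref{t:holoext} verifying the non-characteristic condition on the boundary of a circular tube transfers verbatim to $P(\Delta_\C)$.

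By ellipticity of $P(\Delta)$ and real-analyticity of $h:=P(\Delta)f$ on $X$ (being the restriction of the holomorphic function on $T_\omega$), $f$ is real-analytic on $X$. Following the pattern in the proof of Theorem~\ref{t:holoext}, the $G$-invariance of $P(\Delta)$, combined with the fact that each translate $L_g h$ remains the restriction to $X$ of a holomorphic function on the $G$-invariant tube $T_\omega$, allows one to arrange a $G$-invariant holomorphic extension of $f$ to a neighborhood of $X$ inside $T_\omega$; by Lemma~\ref{l:crownstructure} this neighborhood then contains a full circular tube $T_{r_0}\subset T_\omega$ centered at some $Y_0\in\omega^+:=\omega\cap\af^+$ close to $0$ (the set $\omega^+$ is nonempty because $\omega$ is a $W$-invariant convex neighborhood of $0\in\af$ and $\af^+$ accumulates at $0$). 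Starting from this tube, Zerner's theorem applied to $P(\Delta_\C)$ with right-hand side $h$ extends $f$ across the boundary of every circular tube contained in $T_\omega$; since $\omega^+$ is convex and thus simply connected, varying $Y_0$ extends $f$ holomorphically to $T_\omega\cap\Xi'$.

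The remainder of the proof of Theorem~\ref{t:holoext} transfers unchanged: for each $g\in G$, the slice functions $f_g(Y)=f(g\exp Y\cdot x_0)$ extend holomorphically to $\af+i\Ad(w)\omega^+$ for every $w\in W$, and by $W$-invariance and convexity of $\omega$ the convex hull of $\bigcup_w\Ad(w)\omega^+$ equals $\omega$ itself, so Bochner's theorem delivers the extension of $f_g$ to $\af+i\omega$, which assembles to a holomorphic extension of $f$ on $T_\omega$. The main technical point, which plays the role of the principal obstacle, is the initial $G$-equivariant extension near $X$: in contrast to the homogeneous setting of Theorem~\ref{t:holoext}, here the right-hand side $L_g h$ varies with $g$ under left translation, and one must use that $L_g h$ remains the restriction to $X$ of a holomorphic function on the $G$-invariant $T_\omega$ to keep the radius of convergence in the elliptic-regularity argument uniform over $G$.
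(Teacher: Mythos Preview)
Your proof is correct. The paper takes a slightly different route: it factors $P$ over $\C$ and, treating one linear factor at a time, reduces to the case $P(\Delta)=\Delta-\lambda$, after which the proof of Theorem~\ref{t:holoext} is repeated for the inhomogeneous equation $(\Delta-\lambda)f=h$. You instead keep $P(\Delta)$ intact and observe that its principal symbol is a nonzero multiple of $\sigma(\Delta)^{\deg P}$, so the non-characteristic verification from Theorem~\ref{t:holoext} carries over directly; this lets you run the whole argument in a single pass rather than $\deg P$ passes. Both routes require the inhomogeneous version of the initial $G$-invariant extension near $X$, and your remark that the translates $L_gh$ all extend to the fixed $G$-invariant tube $T_\omega$ (so that the elliptic-regularity radius at $x_0$ is uniform in $g$) makes explicit a detail the paper leaves implicit in the phrase ``the proof can then be repeated.'' The paper's factoring buys literal reuse of the second-order symbol computation already done; your direct approach avoids the induction at the cost of the one-line principal-symbol observation.
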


\begin{proof} By treating the factors of $P$ successively
we may assume that $P(\Delta)=\Delta-\lambda$. The proof
of Theorem \ref{t:holoext} can then be repeated.
\end{proof}

The following generalization is more far-reaching.
We denote by $\Cas\in \Ze(\gf)$ 
the Casimir element of $\gf$. 

\begin{thm} Let $f\in C^\infty(G)$ be a right $K$-finite 
eigenfunction of $\Cas$. Then $f$ extends to a holomorphic 
function on 
$$\tilde \Xi:= G\exp(i\hat \Omega) K_\C \subset G_\C.$$ 
\end{thm}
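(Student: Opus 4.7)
The plan is to reduce to Theorem \ref{t:holoext} by running essentially the same argument in a vector-valued setting. Let $V\subset C^\infty(G)$ be the finite-dimensional span of the right $K$-translates of $f$, and let $\tau:K\to\Gl(V)$ denote the corresponding representation, which extends to a holomorphic representation $\tau_\C:K_\C\to\Gl(V)$. Because $\Cas\in\Ze(\gf)$ commutes with right translations, every element of $V$ is again a $\Cas$-eigenfunction with the same eigenvalue $\lambda$. Define $F:G\to V^*$ by $F(g)(v)=v(g)$; the computation $F(gk)(v)=v(gk)=F(g)(\tau(k)v)$ shows that $F$ is a smooth section of the $G$-homogeneous vector bundle $\E=G\times_K V^*\to X$, which extends to the holomorphic vector bundle $\E_\C=G_\C\times_{K_\C}V^*\to X_\C$. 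Holomorphic sections of $\E_\C$ over $\Xi$ correspond to holomorphic $K_\C$-equivariant $V^*$-valued functions on $\tilde\Xi=\pi^{-1}(\Xi)\subset G_\C$, and evaluating such a function at the specific vector $f\in V$ produces a holomorphic $\C$-valued function on $\tilde\Xi$ extending $f$. Thus it suffices to holomorphically extend $F$ from $X$ to $\Xi$ as a section of $\E_\C$.

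The $G$-invariant operator $D$ on $\Gamma(\E)$ induced by $\Cas$ satisfies $DF=\lambda F$. The key algebraic observation is that on $K$-equivariant $V^*$-valued functions, right differentiation in a direction $Y\in\kf$ is a zeroth-order operation, acting by the matrix $-d\tau^*(Y)$. Writing $\Cas=\sum_i X_i^2-\sum_j Y_j^2$ with respect to bases of $\pf$ and $\kf$ that are orthonormal for the appropriate form, we see that the only second-order part of $D$ comes from the $\pf$-directions. Consequently, in any local holomorphic trivialization of $\E$ the principal symbol of $D$ equals the principal symbol of the scalar Laplacian $\Delta$ on $X$, tensored with $\bI_{V^*}$; in particular $D$ is elliptic, and the identity (\ref{e:Delta}) for $\Delta$ at $a\cdot x_0$ holds verbatim for $D$, and by analytic continuation also for $D_\C$ compared with $\Delta_\C$.

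With these observations in place the proof of Theorem \ref{t:holoext} transcribes almost unchanged, with $F$ replacing $f$ and $D$ replacing $\Delta$. Elliptic regularity for $D$, applied componentwise in a local trivialization of $\E_\C$, shows that $F$ is real-analytic and admits a uniform holomorphic extension to a $G$-invariant neighborhood of $X$ in $\Xi$. Zerner's theorem applied componentwise then extends $F$ across each circular tube $T_r\subset\Xi'$: the non-characteristic condition for $D_\C$ at $\partial T_r$ reduces to exactly the scalar principal-symbol computation already performed for $\Delta_\C$, because the symbol of $D_\C$ is a scalar multiple of $\bI_{V^*}$. Finally, Bochner's theorem applied componentwise to the $V^*$-valued analogues of the functions $f_g$ extends $F$ from $\Xi'$ to all of $\Xi$; pulling back and evaluating at $f\in V$ yields the holomorphic extension of $f$ to $\tilde\Xi$. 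The main obstacle is formulating the vector-valued ellipticity and non-characteristic statements for $D$, both of which follow from the zeroth-order action of $\kf$ on $K$-equivariant sections, so that each analytic ingredient (elliptic regularity with uniform radius, Zerner, Bochner) reduces immediately to its scalar counterpart used in the proof of Theorem \ref{t:holoext}.
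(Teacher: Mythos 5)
Your reduction to a $V^*$-valued problem on $X$ is a genuinely different route from the paper's. The paper never leaves the scalar setting: it works with $f$ itself on $G$ (and $G_\C$), assumes without loss of generality that the $K$-span $V$ of the right translates is irreducible, so that the right action of the Casimir $\Cas_\kf$ of $\kf$ on $f$ is by a constant, obtains real analyticity from the ellipticity of the single scalar operator $\Cas+2\Cas_\kf$, and then reruns the proof of Theorem \ref{t:holoext} upstairs: Lemma \ref{l:crownstructure} is adapted to $\tilde\Phi:G\times\Omega\times K_\C\to\tilde\Xi$, the tubes are replaced by their $K_\C$-saturations $\tilde T_r=T_rK_\C$, and Zerner's theorem is applied directly to the scalar equation $\Cas f=\lambda f$ on $G_\C$, the conormal to $\partial\tilde T_r$ being zero on $\kf_\C$ and purely imaginary on $\pf$, so that only the $\pf$-part of $\Cas$ contributes to the symbol and the computation from Theorem \ref{t:holoext} applies unchanged. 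Your bundle formulation buys a cleaner conceptual statement (no irreducibility reduction is needed, and recovering the extension of $f$ by evaluating the extended equivariant $V^*$-valued function at $f\in V$ is correct), but it costs you the scalarity of the PDE, and that is where your argument has a real gap.

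Concretely: in a local trivialization of $\E_\C$ the equation $D_\C\tilde F=\lambda\tilde F$ is a determined \emph{system} whose components are coupled through first- and zeroth-order matrix terms coming from $d\tau^*$. Relatedly, identity (\ref{e:Delta}) does \emph{not} hold verbatim for $D$: the relation $R_{X^\alpha_j}f=-[\cosh\alpha(\log a)]^{-1}L_{X^\alpha_j}f$ uses right $K$-invariance, and for an equivariant $F$ the $\kf$-component of $\Ad(a)^{-1}X^\alpha_j$ produces additional lower-order matrix terms; only the principal part is unchanged (which is indeed all you need for the symbol computation). Consequently neither analytic elliptic regularity nor Zerner's theorem, in the scalar forms cited, can be applied ``componentwise'': the equation satisfied by a single component of $\tilde F$ involves the other components. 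What your argument requires are the systems versions — analytic regularity for elliptic systems with analytic coefficients (with the locally uniform radius you need for the $G$-invariant neighborhood), and a Zerner-type continuation theorem for determined systems with scalar principal symbol across a non-characteristic hypersurface. Both are true, and the Cauchy--Kovalevskaya mechanism behind Zerner does extend to systems with your symbol computation as the non-characteristic condition, but they are not the theorems you invoke, and the claim that each ingredient ``reduces immediately to its scalar counterpart'' skips exactly the point where an argument or a correct reference is needed. (Bochner's tube theorem, by contrast, genuinely is componentwise, so that step is fine.) Either supply the systems versions, or do as the paper does and stay scalar on $G$, using irreducibility of $V$ to turn the right $\kf$-Casimir acting on $f$ into a constant.
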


\begin{proof} Recall that $f$ being $K$-finite means that 
the translates $R_kf$ by $k\in K$ span a finite dimensional 
space, which is then a representation space for $K$.
We may assume that it is irreducible, and then $f$ is
an eigenfunction for the Casimir element $\Cas_\kf$
of $\kf$, acting from the right. The operator 
$\Cas+2\Cas_\kf$ is elliptic, so it follows that
$f$ is real analytic. The proof of Theorem \ref{t:holoext}
can now be repeated, with the following changes.

In Lemma \ref{l:crownstructure} we replace the map
$\Phi$ by 
$$\tilde \Phi: G\times  \Omega \times K_\C\to \tilde\Xi,\quad
(g,Y,k)\mapsto g\exp(iY)k,$$
and we define $\tilde \Phi'$ as before, but now on
$(G\times\Omega^+\times K_\C)/M$, where $M$ acts on the first
and last factor, from the right and left, respectively. 

The $G$-invariant tubes $T_r\subset\Xi$ 
are replaced by their $G\times K_\C$-invariant preimages
$\tilde T_r=T_rK_\C\subset\tilde\Xi$. The map
$Z\mapsto L_Z$ from $\pf_\C$ onto $T_x\Xi$
in (\ref{tangentmap})  
is replaced by $Z\oplus U\mapsto L_Z+ R_U$ from 
$\gf_\C=\pf_\C\oplus\kf_\C$ onto $T_x\tilde\Xi$. Here
$x\in\exp(i\Omega)$. The cotangent vector $\zeta$,
normal to $\tilde T_r$ at $x$ is zero on $\kf_\C$
and purely imaginary on $\pf$, by the same argument as 
before. 

Since $f$ is a $\Cas_\kf$-eigenfunction,
the action of $\Cas$ on it differs only by a constant 
from that of the operator $\Delta$ described in 
(\ref{e:Delta}). Hence $\partial \tilde T_r$
is non-characteristic for $\Cas$, and the application of 
Zerner's theorem goes through. The rest of the argument is 
essentially unchanged. 
\end{proof}

\end{document}